\title{Normal numbers and limit computable Cantor series}
\author{Achilles A. Beros}
\address[A. Beros]{Laboratoire d'Informatique de Nantes Atlantique\\
Universit\'e de Nantes\\
2 rue de la Houssini\`{e}re BP 92208\\
44322 Nantes Cedex 03\\
FRANCE\\}
\email{achilles.beros@univ-nantes.fr}
\author{Konstantinos A. Beros}
\address[K. Beros]{Department of Mathematics\\
University of North Texas\\
General Academics Building 435\\
1155 Union Circle, \#311430\\
Denton, TX 76203-5017}
\email{beros@unt.edu}
\theoremstyle{plain}
\newtheorem{Theorem}{Theorem}[section]
\newtheorem{Corollary}[Theorem]{Corollary}
\theoremstyle{definition}
\newtheorem{Definition}[Theorem]{Definition}
\def\upto{\upharpoonright}
\def\NN{{\mathbb N}}
\begin{document}

\begin{abstract}
Given any oracle, $A$, we construct a basic sequence $Q$, computable in the jump of $A$, such that no $A$-computable real is $Q$-distribution-normal.  A corollary to this is that there is a $\Delta^0_{n+1}$ basic sequence with respect to which no $\Delta^0_n$ real is distribution-normal.  As a special case, there is a limit computable sequence relative to which no computable real is distribution-normal.
\end{abstract}

\maketitle

\section{Introduction}

The effective theory of the reals has been an active area of research for many years.  Out of this field have come a number of effective formalizations of the intuitive concept of randomness, e.g., Martin-L\"of randomness.  There are, however, a number of classical formalizations of randomness which derive from ergodic theory.  In the present work, we explore one of these classical notions, but in an effective context.  

Given $b\in \mathbb N$, a real number $x$ is said to be {\em $b$-normal} if the numbers $x, bx, b^2x, \ldots$ are uniformly distributed modulo 1.  That is, for each interval $I \subseteq [0,1]$ of length $\varepsilon$, one has
\[
\lim_{n \rightarrow \infty} \frac{\big| \{ k < n : b^k x (\mbox{mod 1}) \in I\} \big|}{n} = \varepsilon.
\]
Historically, number theorists have developed several methods for algorithmically producing $b$-normal numbers.  One of the best known such methods is the Champernowne construction  (see \cite{champ}).  If $p_i \in b^{<\omega}$ is the base-$b$ expansion of $i \in \NN$, then the real number with $b$-ary expansion
\[
0.p_0 \, p_1 \, p_2 \ldots
\]
is $b$-normal.  For instance, the non-negative integers are 0, 1, 10, 11, 100, $\ldots$ in base 2, and the real with binary expansion
\[
0.0\, 1\, 10\, 11\, 100\, 101\, 110\, 111 \ldots
\]
is 2-normal.  In essence, the Champernowne construction shows that, for each $b$, there is a computable real number which is $b$-normal.

One may generalize the notion of $b$-ary expansions of real numbers to that of so-called ``Cantor series expansions'' (see \cite{cantor}).  Given a sequence $Q = (q_n)_{n \in \NN}$ of positive integers, with each $q_n \geq 2$, and a real number $x \in (0,1)$, there exist integers $a_0 , a_1 , \ldots$ such that $0 \leq a_n < q_n$, for each $n$, and 
\[
x = \sum_{n=0}^\infty \frac{a_n}{q_0 q_1 \ldots q_n}.
\]
This expansion is known as the {\em Cantor series expansion of $x$}, with respect to the {\em basic sequence} $Q$.  Over the years, there has been some study of Cantor series expansions under different assumptions on the basic sequence $(q_n)_{n \in \NN}$.  For instance, see \cite{erdos.renyi:1} and \cite{erdos.renyi:2}.

There is a corresponding generalization of $b$-normality in the context of Cantor series.  Specifically, if $Q = (q_n)_{n \in \NN}$ is a sequence of positive integers, with each $q_n \geq 2$, then $x \in (0,1)$ is said to be {\em $Q$-distribution-normal} if and only if the sequence $x, q_0 x , q_0 q_1 x , q_0 q_1 q_2 x, \ldots$ is uniformly distributed modulo 1.  Thus, $b$-normality is equivalent to $Q$-distribution-normality for $Q = (q_n)_{n\in \NN}$, with each $q_n = b$.

It is an active area of research in modern number theory to try to find constructions analogous to the Champernowne construction in the context of Cantor series and other expansions of real numbers (e.g., continued fractions, L\"uroth expansions, etc.).  Examples of these lines of inquiry can be found in \cite{altomare.mance}, \cite{AKS}, \cite{madritsch} and \cite{madritsch.thuswaldner}.  There has also been work on relating the various classical notions of normality with recursion theoretic and descriptive set theoretic measures of complexity and randomness.  See, for example, \cite{ki.linton}, \cite{becher.heiber.slaman.1} and \cite{becher.heiber.slaman.2}. 

In order to obtain algorithmic constructions of normal numbers in the context of Cantor series, one often places conditions on the sequence $(q_n)_{n \in \NN}$ that guarantee rapid divergence to infinity, e.g., that $\sum_n 1/q_n < \infty$.

In the present work, we provide a group of results which serve as a counterpoint to such attempts to algorithmically produce normal numbers.  The following theorem is our main result.

\begin{Theorem}\label{T:1}
There is a $\Delta_2^0$ basic sequence $Q$ (consisting of powers of 2) such that no computable real number is $Q$-distribution-normal.
\end{Theorem}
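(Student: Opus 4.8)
The plan is to build $Q=(q_n)$ by finite extensions, diagonalizing against all computable reals simultaneously. Enumerate the partial computable functions $\varphi_0,\varphi_1,\ldots$; at stage $e$ we will have already committed to a finite initial segment $q_0,\ldots,q_{N_e-1}$ of $Q$ and will commit to finitely many more terms so as to ensure that *if* $\varphi_e$ is (the base-expansion of, say, or just a fast-converging Cauchy name for) a real $x_e\in(0,1)$, then the sequence $P_n x_e \pmod 1$ — where $P_n=q_0q_1\cdots q_n$ — is *not* uniformly distributed modulo $1$. To refute uniform distribution it suffices to fix one interval $I\subseteq[0,1]$, say $I=[0,1/2)$, and to force the proportion of $k<n$ with $P_k x_e \pmod 1 \in I$ to fail to converge to $1/2$; in fact we will make it converge to $1$ (or oscillate), along the subsequence of indices we control. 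Since each $q_n$ is to be a power of $2$, write $q_n=2^{c_n}$ and $P_n=2^{s_n}$ with $s_n=c_0+\cdots+c_n$; then $P_n x \pmod 1$ is just the binary shift of the fractional part of $x$ by $s_n$ places, i.e. $0.b_{s_n}b_{s_n+1}b_{s_n+2}\cdots$ where $x=0.b_0b_1b_2\cdots$ in binary.

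Here is the key observation that drives the diagonalization. Once we have decided $s_{N_e-1}$ and are waiting on $\varphi_e$, we can try to compute enough of the binary expansion of $x_e=\varphi_e$ to learn a long block of bits $b_m b_{m+1}\cdots b_{m+L-1}$ for $m$ large; we have the freedom to choose, for the next several exponents $c_{N_e},c_{N_e+1},\ldots$, any jump sizes we like, hence to place $s_n$ at any position $\ge s_{N_e-1}$ we choose. So the requirement reduces to: given control of an infinite sequence of ``sampling positions'' $s_{N_e} < s_{N_e+1} < \cdots$ into the binary expansion of $x_e$, arrange that $0.b_{s_n}b_{s_n+1}\cdots \in [0,1/2)$ (equivalently $b_{s_n}=0$) for a non-$1/2$ density of $n$. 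The first step, then, is a purely combinatorial lemma: \emph{for any infinite binary sequence $b$, one can choose an infinite set of positions $S=\{s_0<s_1<\cdots\}$ such that $b_{s_n}=0$ for all $n$ — unless $b$ is eventually all $1$'s, in which case $x_e\notin(0,1)$ or equals $1$ and is excluded anyway.} In other words, any real with infinitely many $0$'s in its expansion has a ``too-sparse-in-$1$'s'' subsequence of shifts. This makes the refutation of normality trivial in the non-uniform setting; the entire difficulty is making the \emph{choice of $S$ uniform and $\Delta^0_2$}, and interleaving the choices for the different $e$'s into a single basic sequence $Q$.

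**The main obstacle** is exactly this uniformity/approximation issue, and it is what forces the ``$\Delta^0_2$'' (rather than computable) conclusion. We cannot decide, computably in $e$ and a position $m$, whether $b_m=0$: $\varphi_e$ may be partial, or may be a slowly converging name so that the bit $b_m$ is not yet settled. The fix is a standard finite-injury / movable-markers argument run with a $\emptyset'$ oracle. At each stage $t$ we run $\varphi_e$ for $t$ steps; we tentatively set the next exponents $c_n$ so that the sampling positions $s_n$ land on positions whose current approximate bit is $0$; if later computation reveals that one of those bits was actually $1$ (or that the name has not converged far enough), we see this and correct — but because the terms $q_0,\ldots,q_{n-1}$ of $Q$ already committed \emph{cannot} be changed, the correction must be absorbed by inserting \emph{additional} large-exponent terms further out that re-align the tail of the sampling sequence onto genuine $0$-bits. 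The $\emptyset'$ oracle is used to know, for each committed sampling position $s_n$ and each $e$, the true value of $b_{s_n}$ (a $\Sigma^0_1 \vee \Pi^0_1$, hence $\Delta^0_2$, question about $\varphi_e$), so that we never have to guess. One then checks: (i) every requirement $R_e$ (``$\varphi_e$ total and $\in(0,1)$ $\Rightarrow$ $\varphi_e$ not $Q$-distribution-normal'') is met, because after finitely many injuries its sampling markers stabilize and the density of $0$-bit-shifts it controls tends to $1\neq 1/2$; (ii) the construction is total and $\emptyset'$-computable, since at each step we make only a $\Delta^0_2$ decision and commit finitely many new terms; (iii) every $q_n$ is a genuine power of $2$ and $\ge 2$, and each term is eventually committed (we always force $c_n\ge 1$ and make progress), so $Q$ is a legitimate basic sequence. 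This yields a $\Delta^0_2$ basic sequence $Q$ of powers of $2$ with no computable $Q$-distribution-normal real, which is the theorem; the relativized version in the abstract follows by replacing $\emptyset'$ with $A'$ and ``computable'' with ``$A$-computable'' throughout.
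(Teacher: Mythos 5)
Your strategy — diagonalize by controlling the sampling positions $s_n$, landing them on bits of $\varphi_e$'s binary expansion that are known to be $0$, approximated at the $\Delta^0_2$ level — is essentially the same as the paper's, and your identification of the ``entire difficulty'' as the interleaving of infinitely many such requirements is exactly right. But you stop at the point where the actual work begins, and the way you phrase what you would prove exposes a real gap.

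The gap is in your verification claim (i). You write that requirement $R_e$ is met because ``the density of $0$-bit-shifts it controls tends to $1$.'' But $Q$-distribution-normality of $x_e$ is a statement about the proportion of \emph{all} $k<n$ with $P_k x_e\ (\mathrm{mod}\ 1)\in[0,1/2)$, not the proportion among the finitely (or sparsely) many positions that requirement $e$ happens to control. If at stage $e$ you commit only finitely many new terms $q_{N_e},\ldots,q_{N_{e+1}-1}$, then you can push the proportion at the single cutoff $n=N_{e+1}$ above $2/3$, but nothing stops the terms committed at later stages (diagonalizing against other functions) from dragging the proportion back to $1/2$ at all larger $n$. One data point of bias per requirement does not refute convergence. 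If instead you try to reserve for each $e$ an \emph{infinite} set of positions on which $b_{s_n}=0$, you face the objection that these reserved sets cannot all have density close to $1$, so ``density of the controlled $0$-bit-shifts tends to $1$'' cannot mean what you need it to mean. You never say how the interleaving actually forces, for each total $\varphi_e$, infinitely many $n$ at which the global proportion is bounded away from $1/2$.

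The paper's construction solves exactly this problem, and the device it uses is one you do not mention: padding. At stage $t+1$ the construction attacks index $t$ and commits positions $f(3^t),\ldots,f(3^{t+1}-1)$ — that is $2\cdot 3^t$ out of the first $3^{t+1}$ positions, a $2/3$ fraction — all landing on bits where $\phi^*_t$ takes a single constant value. This yields one biased cutoff $n=3^{t+1}$ for the function $\phi^*_t$. Then the proof observes that any fixed $\alpha=\phi^*_e$ satisfies $\alpha=\phi^*_{i_k}$ for infinitely many indices $i_0<i_1<\cdots$, so $\alpha$ is re-attacked infinitely often, giving infinitely many $n$ at which the global proportion is $\ge 2/3$ or $\le 1/3$. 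That is what kills the limit. Your plan has no analogue of this re-attacking, and without it the argument does not close.

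Two smaller points. First, you wave away reals whose binary expansion is eventually all $1$'s by saying they are ``$\notin(0,1)$ or equal $1$''; that is false — $0.0111\cdots=1/2$ — though the fix is trivial (land on $1$-bits instead, forcing the proportion $\le 1/3$), and the paper's choice of the \emph{lesser} $k\in\{0,1\}$ with $|A_k|$ large handles it automatically. Second, you describe the construction simultaneously as a finite-injury/movable-markers argument \emph{and} as one run directly with a $\emptyset'$ oracle ``so that we never have to guess''; these are alternative routes to $\Delta^0_2$, not the same one, and invoking both blurs which you mean. The paper's route is the former, but cast cleanly: it rebuilds a finite partial function $f_s$ from scratch at every computable stage $s$, using the total modification $\phi^*_{e,s}$ to sidestep partiality, and then shows $f=\lim_s f_s$ exists via the explicit bound $f_s(3^{t+1})\le 2(3^{t+1}-1)$, which is uniform in $s$. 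That uniform bound is the other key ingredient you would need to supply and did not.
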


\section{Preliminaries}

As we are presenting Theorem~\ref{T:1} in the context of basic sequences consisting of powers of 2 (although it could just as easily be done with an arbitrary $b$), we introduce some notation for working with binary expansions of real numbers in $[0,1]$.

\medskip

\noindent\textsc{Notation:}
\begin{enumerate}
\item If $\alpha \in 2^\NN$, let $x_\alpha$ denote the real number $\sum_{n \in \NN} \frac{\alpha (n)}{2^{-n-1}}$.

\item If $n \in \NN$ and $\alpha \in 2^\NN$, we will write $n \alpha$ for $(\alpha(n) , \alpha (n+1), \ldots)$, i.e., $n\alpha$ is the $n$-bit left shift of $\alpha$.
\end{enumerate}

Suppose that $Q = (q_n)_{n \in \NN}$ with each $q_n = 2^{s_n}$, for some integers $s_n \geq 1$.  If $\alpha \in 2^\NN$ and $\alpha$ does not end with an infinite string of 1's, then, for each $n$ and $p = s_0 + \ldots + s_n$, we have $q_0 \cdot \ldots \cdot q_n x_{\alpha} ({\rm mod}\, 1) =  x_{p \alpha}$.

The following is our key computability-theoretic definition.

\begin{Definition}
We say that $x \in [0,1]$ is $\Delta^0_n$ if and only if there is an $\alpha \in 2^\NN$ such that $\{ n \in \omega : \alpha (n) = 1\}$ is a $\Delta^0_n$ subset of $\NN$ and $x = x_\alpha$.
\end{Definition}

Recall that a subset $A\subseteq \NN$ is $\Delta^0_n$ if and only if $A$ is computable in $0^{(n)}$ (the $n$-fold jump of $\emptyset$).  Our definition of $\Delta^0_n$ for $x \in [0,1]$ is equivalent to the standard definition of $\Delta^0_n$ for the associated real $\psi (\alpha) \in [0,1]$ (see \cite[\S 1.8]{nies}).

Next, we require an enumeration of all computable reals.  Note that an enumeration of all computable reals will include c.e.~reals as well, unless an appropriate oracle is introduced.  To avoid the extra complexity inherent in dealing with partial functions, we define a slightly modified universal Turing machine.

\begin{Definition}
Let $\{\phi_{e,s}\}_{e,s\in\mathbb N}$ be the standard enumeration of all binary-valued partial computable functions.  We define an array of computable functions, $\{\phi^*_{e,s}\}_{e,s\in\mathbb N}$, as follows:
\[
\phi^*_{e,s}(x) = \begin{cases}
\phi_{e,s}(x) & \mbox{ if } \phi_{e,s}(x)\downarrow \\
0 & \mbox{ otherwise. }
\end{cases},
\]
\end{Definition}

Unlike the standard universal Turing machine, $\phi^*$ may change its values.  Each value, however, will change at most once and from $0$ to $1$, if it does change.  The sequence, $\{\phi_e^*\}_{e\in \mathbb N}$, serves as an enumeration of the computable reals, although it is obviously not a computable enumeration.  We will freely identify each $\phi^*_{e,s}$ with the infinite sequence it codes.

Following the notation introduced above, we let $n \phi^*_{e,s}$ denote the $n$-bit left shift of the infinite sequence determined by $\phi^*_{e,s}$, i.e., if $\phi^*_{e,s}$ codes the sequence $\alpha$, then $n \phi^*_{e,s}$ codes the sequence $(\alpha(n) , \alpha (n+1) , \ldots) \in 2^\NN$.

Note that the computable reals in $[0,1]$ are exactly the reals of the form $x_{\phi^*_e}$.

\section{Diagonalizing against all computable and c.e.~reals}

To prove Theorem~\ref{T:1}, we will construct a strictly increasing $\Delta^0_2$ function $f : \NN \rightarrow \NN$ such that $Q = (q_n)_{n \in \NN}$, with $q_n = 2^{f(n+1) - f(n)}$, is a basic sequence with the property that no computable real is $Q$-distribution-normal.

As the desired function is to be $\Delta_2^0$, we will construct it as the limit of a computable sequence of finite partial functions, $\{f_s\}_{s\in \mathbb N}$.  For an arbitrary $s$, the function $f_s$ is constructed in $s+1$ stages.  We present the construction of $f_s$.

\bigskip

\noindent\textsc{Stage $0$:}  We define $f_s(0) = 0$ and end the stage.  The domain of $f_s$ is currently $[0,1) = [0,3^0)$.\par

\smallskip

\noindent\textsc{Stage $t+1$:}  We define
\[
A_k = \{ p \in (f_s(3^t - 1),\infty) : \phi^*_{t,s+1}(p) = k \}.
\]
Either $|A_0| \geq 2(3^{t})$ or $|A_1| \geq 2(3^{t})$, so let $k$ be the least of $0$ and $1$ such that $|A_k| \geq 2(3^{t})$.  We choose $p_1 < \ldots < p_{2(3^{t})}$ in $A_k$, with each $p_i$ as small as possible.  Set $f_s (3^{t}+i) = p_{i+1}$ for $i \leq 2(3^{t})-1$ and end the stage.  The domain of $f_s$ is currently $[0,3^{t+1})$.\par

\bigskip

By the pigeonhole principle, the interval $(f_s(3^t - 1),4(3^t) + f_s(3^t - 1))$ must either contain at least $2(3^t)$-many $p$ such that $\phi^*_{t+1,s+1}(p) = 0$ or $2(3^t)$-many $p$ such that $\phi^*_{t+1,s+1}(p) = 1$.  It follows that 
\[
f_s (3^{t+1} -1) \leq 4(3^t) + f_s(3^t - 1),
\]
for each $t \leq s$.  Hence, 
\begin{equation}\label{ceE1}
f_s (3^{t+1}) \leq 0 + 4 + 12 + \ldots + 4(3^t) = 2(3^{t+1} - 1)
\end{equation}
for all $s$ and $t$, with $t \leq s$.  Note that this upper bound is independent of $s$.

\bigskip

Now that we have defined $f_s$ for $s \in \mathbb N$, we define $f(x) = \lim_{s\rightarrow \infty} f_s(x)$.  To verify that we have constructed a function with the desired properties, we must prove two claims.  First, we must prove that $f$ is well-defined; in other words, for every $p \in \mathbb N$, there exists $m \in \mathbb N$ such that for all $s \geq m$, $f_s(p) = f_m(p)$.  We fix $p \in \mathbb N$ and suppose $i \in \NN$ is such that $p < 3^i$.  Pick $m\in \mathbb N$ such that if $s \geq m$, then
\[
\phi^*_{e}\upto \max\{f_a(3^i) : a \in \NN \} = \phi^*_{e,s}\upto \max\{f_a(3^i) : a \in \NN \},
\]
for all $e \leq i$.  Note that the maxima above are finite by \eqref{ceE1}.  Clearly $f_s(p) = f_m(p)$ for all $s \geq m$, since $f_s (p)$ depends only on the values of $\phi_e^*(\ell)$, for $e \leq i$ and
\[
\ell \leq \max\{f_a(3^i) : a \in \NN \} < \infty.
\]
Thus, $f$ is well-defined and therefore, $\Delta_2^0$.\par

Let $q_n = 2^{f(n+1) - f(n)}$ and let $Q = (q_n)_{n \in \NN}$.  The second claim we must verify is that no real number of the form $x_{\phi^*_e}$ is $Q$-distribution-normal.  Fix $\alpha = \phi^*_e$ and let $i_0<i_1<i_2 \ldots$ be a sequence of natural numbers such that $\phi^*_{i_k} = \alpha$ for all $k \in \mathbb N$.  We consider a single value of $k$.  From the definition of $f_s$ it is clear that either
\[
\frac{\big|\{  p \leq 3^{i_k} : x_{f(p)\alpha}\leq \frac{1}{2}  \}\big|}{3^{i_k}} \geq 2/3 \mbox{\hspace{1em} or \hspace{1em}} \frac{\big|\{  p \leq 3^{i_k} : x_{f(p)\alpha}\geq \frac{1}{2}  \}\big|}{3^{i_k}} \geq 2/3.
\]
Since this is true for all $k\in \mathbb N$ and $\phi^*_{i_k} = \phi^*_{e}$, we conclude that
\[
\lim_{n\rightarrow \infty} \frac{|\{  p \leq n : x_{f(p)\alpha}\leq \frac{1}{2}  \}|}{n}
\]
either does not exist or is not $\frac{1}{2}$.  Hence $\alpha = x_{\phi^*_{e}}$ is not $Q$-distribution-normal.  As every computable real occurs in the sequence $\{ x_{\phi^*_e} \}_{e \in \NN}$, we have proved the desired result.

\section{Generalizations}

Relativizing the proof of Theorem~\ref{T:1} to an arbitrary oracle, we obtain the following theorem.

\begin{Theorem}\label{lcA-for-cA}
Let $A$ be any subset of the natural numbers.  There is a basic sequence $Q$, limit computable in $A$, such that no computable real is $Q$-distribution-normal.
\end{Theorem}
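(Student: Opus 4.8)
The plan is to carry out the construction from the proof of Theorem~\ref{T:1} verbatim, but relative to the oracle $A$. First I would fix an $A$-computable enumeration $\{\phi^A_{e,s}\}_{e,s\in\NN}$ of all binary-valued partial $A$-computable functions and, exactly as in the definition of $\phi^*_{e,s}$, pass to the total $A$-computable array
\[
\phi^{*,A}_{e,s}(x) = \begin{cases} \phi^A_{e,s}(x) & \text{if } \phi^A_{e,s}(x)\downarrow, \\ 0 & \text{otherwise.} \end{cases}
\]
As before, each value $\phi^{*,A}_{e,s}(x)$ changes at most once, and only from $0$ to $1$, and the reals of the form $x_{\phi^{*,A}_e}$, for $e\in\NN$, are exactly the $A$-computable reals in $[0,1]$ (though, as in the unrelativized case, this enumeration is not $A$-computable). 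Since every computable real is $A$-computable, it is enough to defeat all reals $x_{\phi^{*,A}_e}$.

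Next I would run the stagewise definition of the finite partial functions $f_s$ word for word, using $\phi^{*,A}_{t,s+1}$ in place of $\phi^*_{t,s+1}$ at Stage $t+1$. Each $f_s$ is then a strictly increasing finite partial function computable from $A$, uniformly in $s$, so $\{f_s\}_{s\in\NN}$ is an $A$-computable sequence. The pigeonhole estimate yielding inequality \eqref{ceE1} is purely combinatorial and makes no reference to the oracle, so it holds unchanged; in particular $f_s(3^{t+1}) \leq 2(3^{t+1}-1)$, independently of $s$. The proof that $f(x) = \lim_{s\to\infty} f_s(x)$ exists also transfers: fixing $p$ and $i$ with $p < 3^i$, each $\phi^{*,A}_e$ with $e \leq i$ is a \emph{total} $A$-computable function, so for all sufficiently large $s$ it agrees with $\phi^{*,A}_{e,s}$ on the fixed finite initial segment up to $\max\{f_a(3^i) : a\in\NN\} < \infty$, whence $f_s(p)$ stabilizes. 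Thus $f = \lim_s f_s$ is limit computable in $A$ (equivalently, computable in $A'$), and so is the basic sequence $Q = (q_n)_{n\in\NN}$ with $q_n = 2^{f(n+1)-f(n)} \geq 2$, since $Q$ is obtained from $f$ by a computable operation.

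Finally I would repeat the diagonalization unchanged. Fix $\alpha = \phi^{*,A}_e$ and choose $i_0 < i_1 < \cdots$ with $\phi^{*,A}_{i_k} = \alpha$ for all $k$. By the construction, for each $k$ either at least $2/3$ of the $p \leq 3^{i_k}$ satisfy $x_{f(p)\alpha} \leq \frac{1}{2}$, or at least $2/3$ satisfy $x_{f(p)\alpha} \geq \frac{1}{2}$; since this recurs for arbitrarily large $3^{i_k}$, the limit $\lim_n |\{p \leq n : x_{f(p)\alpha} \leq \frac{1}{2}\}|/n$ either fails to exist or is not $\frac{1}{2}$. Hence $x_{\phi^{*,A}_e}$ is not $Q$-distribution-normal, and therefore no computable real --- indeed, no $A$-computable real --- is $Q$-distribution-normal.

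There is essentially no new mathematical obstacle; the work is simply to confirm that nothing in the original argument secretly used the fact that the oracle was trivial. The one point deserving genuine care is the well-definedness of $f$, which relies on each $\phi^{*,A}_e$ being total: this is exactly why one diagonalizes against the modified array $\phi^{*,A}$ rather than against a raw enumeration of $A$-partial-computable functions, whose finite initial segments need not stabilize.
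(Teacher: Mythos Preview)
Your proposal is correct and follows exactly the approach the paper intends: the paper's own proof of this theorem is the single sentence ``Relativizing the proof of Theorem~\ref{T:1} to an arbitrary oracle, we obtain the following theorem,'' and you have spelled out precisely that relativization. Your observation that the construction in fact defeats all $A$-computable reals, not merely all computable ones, is also what the paper uses in the subsequent corollary (and in the abstract), so you have identified the intended strength of the result.
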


By the relativized limit lemma, a set is limit computable in $A$ if and only if it is computable in $A'$, the jump of $A$.  As a consequence, we obtain a direct generalization of Theorem \ref{T:1} for all the ``$\Delta$-classes'' of the arithmetical hierarchy.

\begin{Corollary}
There is a $\Delta^0_{n+1}$ basic sequence $Q$ such that no $\Delta^0_n$ real is $Q$-distribution-normal.
\end{Corollary}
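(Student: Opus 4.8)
The plan is to read off the Corollary as a single instantiation of Theorem~\ref{lcA-for-cA}, specializing the oracle to $A = 0^{(n)}$ and then translating the relativized statement into the language of the arithmetical hierarchy by means of the (relativized) limit lemma. There is no new mathematical content to produce; the entire task is to keep the jump indices aligned.

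Concretely, I would proceed as follows. First, apply Theorem~\ref{lcA-for-cA} with $A = 0^{(n)}$, the $n$-fold jump of $\emptyset$. This produces a basic sequence $Q$ that is limit computable in $0^{(n)}$ and such that no $0^{(n)}$-computable real is $Q$-distribution-normal. Second, invoke the relativized limit lemma, exactly as in the remark following Theorem~\ref{lcA-for-cA}: a set is limit computable in $0^{(n)}$ if and only if it is computable in $\bigl(0^{(n)}\bigr)' = 0^{(n+1)}$. Hence $Q$ is computable in $0^{(n+1)}$, i.e., $Q$ is a $\Delta^0_{n+1}$ basic sequence in the sense fixed in the Preliminaries. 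Third, observe that the reals defeated are precisely the $\Delta^0_n$ reals: by the Definition of a $\Delta^0_n$ real together with the remark immediately after it, $x \in [0,1]$ is $\Delta^0_n$ exactly when $x = x_\alpha$ for some $\alpha \in 2^\NN$ with $\{ m : \alpha(m) = 1\}$ computable in $0^{(n)}$, equivalently when $x$ is a $0^{(n)}$-computable real. Therefore ``no $0^{(n)}$-computable real is $Q$-distribution-normal'' is literally the conclusion ``no $\Delta^0_n$ real is $Q$-distribution-normal,'' and the proof is complete.

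The only step that carries any weight is the relativization underlying Theorem~\ref{lcA-for-cA} itself: in the construction of the $f_s$ one replaces the array $\phi^*_{e,s}$ by the analogous array built from an enumeration of the $A$-partial-computable functions together with a $\Sigma^0_1(A)$ approximation to $A$, so that each $f_s$ is uniformly computable from a finite part of $A$ and $f = \lim_s f_s$ is limit computable in $A$. Everything else in that argument — the pigeonhole bound \eqref{ceE1}, the well-definedness of the limit $f$, and the diagonalization showing that each $x_{\phi^*_e}$ fails $Q$-distribution-normality — goes through verbatim with $A$ carried along as a parameter, and this is exactly what Theorem~\ref{lcA-for-cA} asserts. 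Consequently the Corollary needs no further argument; if there is a ``main obstacle'' it is purely bookkeeping, namely noting that the relativized limit lemma costs exactly one jump, which is precisely why the complexity of $Q$ is $\Delta^0_{n+1}$ rather than higher.
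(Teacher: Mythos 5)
Your proposal is correct and mirrors the paper's own argument exactly: instantiate Theorem~\ref{lcA-for-cA} at $A = 0^{(n)}$, use the relativized limit lemma to pass from ``limit computable in $0^{(n)}$'' to ``$\Delta^0_{n+1}$,'' and identify the defeated reals with the $\Delta^0_n$ reals via the paper's definition. The only cosmetic difference is that you make explicit the (correct, and clearly intended) reading of Theorem~\ref{lcA-for-cA} as asserting that no \emph{$A$-computable} real is $Q$-distribution-normal, which the paper's theorem statement renders slightly imprecisely but its proof of the corollary and its abstract both confirm.
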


\begin{proof}
Setting $A = 0^{(n)}$, Theorem \ref{lcA-for-cA} guarantees the existence of a basic sequence $Q$ which is limit computable in $0^{(n)}$ and such that no real computable in $0^{(n)}$ is $Q$-distribution-normal.  If $Q$ is such a sequence, then $Q$ is computable in $0^{(n+1)}$.  Equivalently, $Q$ is $\Delta_{n+1}^0$.
\end{proof}

\bibliographystyle{plain}

\end{document}